\newlength\tindent
\tikzset{node distance=2em, ch/.style={circle,draw,on chain,inner sep=2pt},chj/.style={ch,join},every path/.style={shorten >=4pt,shorten <=4pt},line width=1pt,baseline=-1ex}
\let\dlabel=\alabel
\newcommand{\dnode}[2][chj]{%
\node[#1,label={below:\dlabel{#2}}] {};
}
\newcommand{\dnodenj}[1]{%
\dnode[ch]{#1}
}
\newcommand{\dnodebr}[1]{%
\node[chj,label={below right:\dlabel{#1}}] {};
}
\newcommand{\dydots}{%
\node[chj,draw=none,inner sep=1pt] {\dots};
}
\newtheorem{theorem}{Theorem}
\newtheorem{lemma}{Lemma}
\newtheorem*{conjecture}{Conjecture}
\title{Cosmall Roots and Curve Neighborhoods}
\author{Chi-Nuo Lee and Arthur Wang}
\date{\today}
\begin{document}

\maketitle

\section{Introduction}

Let $X = G/P$ be a homogeneous space defined by a connected, simply connected, semisimple complex Lie group $G$ and a parabolic subgroup $P$. Given a subvariety $\Omega \subset X$, and a degree $d \in H_2(X)$, we define the \emph{curve neighborhood} $\Gamma_d(\Omega)$ to be the closure of the union of all rational curves of degree $d$ in $X$ that meet $\Omega$. Curve neighborhoods are objects of interest since they have applications in the study of quantum cohomology and quantum K-theory. 

The study of $\Gamma_d(\Omega)$ in \cite{BCMP} by Buch, Chaput, Mihalcea, and Perrin led to the result that if $\Omega$ is a Schubert variety in $X$, then so is $\Gamma_d(\Omega)$. This study was continued in \cite{BM} where an explicit combinatorial formula was found for the Weyl group element corresponding to $\Gamma_d(\Omega)$ when $\Omega \subset X$ is a Schubert variety. 

Fix a maximal torus $T$ and a Borel subgroup $B$ such that $T \subset B \subset P \subset G$. Let $R$ be the associated root system, with positive roots $R^+$ and simple roots $\Delta \subset R^+$. Let $W$ be the Weyl group of $G$ and $W_P$ the Weyl group of $P$. Given a $w \in W$, we define the Schubert variety $X(w) = \overline{Bw.P}$ in $X$. If $w$ is the minimal representative for its coset in $W/W_P$, then we have $\dim(X(w)) = l(w)$, where the length of a Weyl group element is the minimal number of simple reflections it can be the product of.  The parabolic subgroup $P$ corresponds to the set of simple roots $\Delta_P = \{\beta \in \Delta \mid s_{\beta} \in W_P\}$. The group $W_P$ is generated by $s_{\beta}$ for $\beta \in \Delta_P$. Set $R_P = R \cap \mathbb{Z}\Delta_P$ and $R^{+}_P = R^+ \cap \mathbb{Z}\Delta_P$, where $\mathbb{Z}\Delta_P =$ Span$_{\mathbb{Z}}(\Delta_P)$ is the group spanned by $\Delta_P$. Each root $\alpha \in R$ has a coroot $\alpha^{\vee} = \frac{2\alpha}{(\alpha,\alpha)}$.

Given a positive root $\alpha$ with $s_{\alpha} \notin W_P$, let $C_{\alpha} \subset X$ be the unique $T$-stable curve that contains the $T$-fixed points $1.P$ and $s_{\alpha}.P$. The homology group $H_2(X) = H_2(X;\mathbb{Z})$ can be identified with the quotient $\mathbb{Z}\Delta^{\vee}/\mathbb{Z}\Delta^{\vee}_P$, where $\mathbb{Z}\Delta^{\vee}$ is the coroot lattice of $G$ and $\mathbb{Z}\Delta^{\vee}_P$ is the coroot lattice of $P$. Under this identification, the degree $[C_{\alpha}] \in H_2(X)$ is equal to the image of the coroot $\alpha^{\vee}$. 

We can describe $\Gamma_d(X(w))$ using the \emph{Hecke product} on $W$. For $u \in W$ and $\beta \in \Delta$, define 

\[
u \cdot s_{\beta} =
     \begin{cases} 
      us_{\beta} & \text{if } l(us_{\beta}) > l(u)\\
      u & \text{if } l(us_{\beta}) < l(u)
   \end{cases}
\] 
Let $u$,$v \in W$ and let $v = s_{\beta_1}s_{\beta_2}\cdots s_{\beta_l}$ be any reduced expression for $v$. Define the \emph{Hecke product} of $u$ and $v$ 

\begin{equation*}
    u \cdot v = u \cdot s_{\beta_1} \cdot s_{\beta_2} \cdot \ldots \cdot s_{\beta_l},
\end{equation*} 
where simple reflections are multiplied to $u$ in a left to right order.

Given a degree $d \in H_2(X) = \mathbb{Z}\Delta^{\vee}/\mathbb{Z}\Delta^{\vee}_P$, the maximal elements of the set $\{\alpha \in R^+ \setminus R^{+}_P \mid \alpha^{\vee} + \mathbb{Z}\Delta^{\vee}_P \leq d\}$ are called \emph{maximal roots} of $d$. Then we have the following theorem from \cite{BM}, which is formulated using the \emph{Hecke product},

\begin{theorem}
Let $d$ be a non-zero degree and $\alpha$ be a maximal root of $d$. For any $w \in W$ we have $\Gamma_d(X(w)) = \Gamma_{d-\alpha^{\vee}}(X(w \cdot s_{\alpha}))$.
\end{theorem}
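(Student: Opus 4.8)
The plan is to reduce the statement to a combinatorial identity about $T$-fixed points via the moment graph, prove the two inclusions separately, and isolate the maximality of $\alpha$ as the ingredient that makes the harder inclusion work.

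\textbf{Reduction to reachability.} Since $X(w)$ is $B$-stable, every rational curve meeting it lies in a $B$-stable family, so $\Gamma_d(X(w))$ is $B$-stable; by the theorem of \cite{BCMP} it is a Schubert variety $X(z)$ for a unique minimal coset representative $z \in W$. Hence it suffices to identify the top $T$-fixed point. Writing $X(w) = \bigcup_{u \le w} \overline{Bu.P}$ and using a Bia\l ynicki--Birula/degeneration argument to push curves to the $T$-fixed locus, the $T$-fixed points of $\Gamma_d(X(w))$ are exactly the $v.P$ that are joined to some $u.P$ with $u \le w$ by a chain of $T$-stable rational curves of total degree $\le d$. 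Here I record the basic fact that the $T$-curve joining $u.P$ and $us_\beta.P$ is $\dot u . C_\beta$, a left translate of $C_\beta$, and so has degree $\beta^\vee$. The theorem thus becomes the claim that the maximal $v$ reachable from $X(w)$ in degree $\le d$ equals the maximal $v$ reachable from $X(w\cdot s_\alpha)$ in degree $\le d-\alpha^\vee$.

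\textbf{The easy inclusion.} First I would verify $X(w\cdot s_\alpha)\subseteq \Gamma_{\alpha^\vee}(X(w))$: the curve $\dot w.C_\alpha$ has degree $\alpha^\vee$ and joins $w.P$ to $ws_\alpha.P$, so $ws_\alpha.P\in \Gamma_{\alpha^\vee}(X(w))$, and since the latter is $B$-stable and closed it contains $\overline{B\,ws_\alpha.P}=X(w s_\alpha)$; when the Hecke product instead equals $w$ the inclusion $X(w)\subseteq\Gamma_{\alpha^\vee}(X(w))$ is trivial. Granting this, the inclusion $\Gamma_{d-\alpha^\vee}(X(w\cdot s_\alpha))\subseteq \Gamma_d(X(w))$ follows by concatenation of chains: prepending a degree-$\alpha^\vee$ curve to a chain of degree $\le d-\alpha^\vee$ produces a chain of degree $\le d$, which is the set-theoretic statement that $\Gamma_{d-\alpha^\vee}(\Gamma_{\alpha^\vee}(X(w)))\subseteq \Gamma_d(X(w))$. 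Note this inclusion uses nothing about maximality; it is genuinely formal.

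\textbf{The hard inclusion.} The reverse containment $\Gamma_d(X(w))\subseteq \Gamma_{d-\alpha^\vee}(X(w\cdot s_\alpha))$ is the crux, and it is where maximality becomes indispensable. Equivalently, the top fixed point $z$ reachable from $X(w)$ in degree $\le d$ must already be reachable from $X(w\cdot s_\alpha)$ in degree $\le d-\alpha^\vee$. The plan is to prove a \emph{front-loading} lemma: any optimal chain of $T$-curves realizing $z$ can be rerouted to a chain with the same or larger endpoint whose first step is exactly the maximal-root curve reaching $w\cdot s_\alpha$, leaving total degree $\le d-\alpha^\vee$ for the rest. As the base case $d=\alpha^\vee$ already shows, this is precisely the assertion that the single maximal jump $w\mapsto w\cdot s_\alpha$ dominates, in Bruhat order, the endpoint of every chain of degree $\le \alpha^\vee$; for a non-maximal root it is false, since (as in the rank-two example of an irreducible degree-$\theta^\vee$ curve joining $1.P$ and $s_\theta.P$) there are direct curves that do not factor through any smaller increment, and then $\Gamma_d\neq\Gamma_{d-\alpha^\vee}\circ\Gamma_{\alpha^\vee}$.

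I expect the main obstacle to be this exchange step: showing that a maximal root of $d$ dominates the first increment $\beta^\vee$ of an arbitrary chain, and then that one may substitute the $\alpha$-step for it without decreasing the endpoint. I would attack it using root-system inequalities comparing $w\cdot s_\beta$ with $w\cdot s_\alpha$ for $\beta\le\alpha$, together with the lifting and associativity properties of the Hecke product, and close the argument by induction on the degree $d$ (with the single-step lemma as the base), or alternatively by first treating the point case $\Gamma_d(1.P)$ and bootstrapping through $\Gamma_d(X(w))=X(w\cdot z_d)$.
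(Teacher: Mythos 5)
A preliminary remark: the paper you are comparing against contains no proof of this statement --- Theorem 1 is quoted from \cite{BM}, so the only meaningful comparison is with the argument given there. Your proposal reproduces the correct architecture of that argument: reduce via \cite{BCMP} to the fact that curve neighborhoods of Schubert varieties are Schubert varieties, compute $T$-fixed points by chains of $T$-stable curves, observe that $\Gamma_{d-\alpha^\vee}(X(w\cdot s_\alpha))\subseteq\Gamma_d(X(w))$ is formal, and isolate maximality in the reverse inclusion. But it stops exactly where the theorem begins. The ``front-loading'' exchange lemma you defer to future work is not a technical loose end: it \emph{is} the theorem, and in \cite{BM} it occupies the combinatorial heart of the paper, in the form of Bruhat-order inequalities showing, roughly, that $s_\beta\cdot z^P_{d-\beta^\vee}W_P\leq s_\alpha\cdot z^P_{d-\alpha^\vee}W_P$ for \emph{every} positive root $\beta$ with $\beta^\vee+\mathbb{Z}\Delta^{\vee}_P\leq d$, proved by induction with case analysis on quantities such as $\langle\beta,\alpha^\vee\rangle$. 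Your sketched attack even starts from a false reduction: you propose inequalities ``comparing $w\cdot s_\beta$ with $w\cdot s_\alpha$ for $\beta\leq\alpha$,'' but the first increment $\beta$ of a competing chain only satisfies $\beta^\vee+\mathbb{Z}\Delta^{\vee}_P\leq d$; the set of such roots is not totally ordered and can contain several maximal elements, so $\beta\leq\alpha$ can fail, and those are precisely the configurations where the exchange is delicate. This is a genuine gap, conceded in your own final paragraph.

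There is also a concrete error in your ``easy inclusion.'' You treat $w\cdot s_\alpha$ as two-valued ($ws_\alpha$ or $w$ according to lengths); that dichotomy is the definition only for \emph{simple} reflections. For non-simple $\alpha$ the Hecke product can differ from both: in type $A_2$ with $w=s_1$ and $\alpha$ the highest root, $w\cdot s_\alpha=s_1\cdot s_1\cdot s_2\cdot s_1=s_1s_2s_1$, while $ws_\alpha=s_2s_1$; hence $\overline{Bws_\alpha.P}=X(s_2s_1)$ does not contain $X(w\cdot s_\alpha)=X(s_1s_2s_1)$, and your single translated curve $\dot w.C_\alpha$ plus $B$-stability does not yield $X(w\cdot s_\alpha)\subseteq\Gamma_{\alpha^\vee}(X(w))$. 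The inclusion is nevertheless easy to repair: $s_\alpha.P\in C_\alpha$ gives $X(s_\alpha)\subseteq\Gamma_{\alpha^\vee}(1.P)$ by $B$-stability, translating curves through $1.P$ by elements of $Bw$ gives $\overline{Bw.\Gamma_{\alpha^\vee}(1.P)}\subseteq\Gamma_{\alpha^\vee}(X(w))$, and the standard identity $\overline{BwB.X(v)}=X(w\cdot v)$ then produces $X(w\cdot s_\alpha)$; in the $A_2$ example this corresponds to the two-curve chain $s_1\to s_1s_2\to s_1s_2s_1$ of total degree $\beta_2^\vee+\beta_1^\vee=\alpha^\vee$, not to any single curve. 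With that repair the formal direction stands, but the theorem's substance --- the reverse inclusion --- remains unproven in your proposal.
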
 

Theorem 1 relates the curve neighborhood of any Schubert variety to the curve neighborhood of a point. In fact, if we define $z^{P}_d \in W^{P}$ by $\Gamma_d(1.P) = X(z^{P}_d)$, then Theorem 1 implies that for any $u \in W$, $\Gamma_{d}(X(u)) = X(u \cdot z^{P}_{d})$. Much of the work done in \cite{BM} focuses on the curve neighborhoods of a point. 

A root $\alpha \in R^+ \setminus R^{+}_P$ is called $P$-cosmall if $\alpha$ is a maximal root of $\alpha^{\vee} + \Delta^{\vee}_P \in H_2(X;\mathbb{Z})$. The highest root in $R$ is always $P$-cosmall for every parabolic subgroup $P$. If we consider the special case of $P = B$, then we can talk about $B$-cosmall roots, which we also refer to as cosmall roots. Note that any $P$-cosmall root is cosmall and if $R$ is simply laced, then all roots are cosmall. In general if $\alpha$ is a simple root or a long root, then $\alpha$ is cosmall. We have the following conjecture from \cite{BM} concerning $P$-cosmall roots.

\begin{conjecture}
Assume that $R$ is simply laced and let $\alpha \in R^+\setminus R^{+}_P$. 
Then $\alpha$ is $P$-cosmall if and only if $\Gamma_{\alpha^\vee}(1.P) = X(s_{\alpha})$.
\end{conjecture}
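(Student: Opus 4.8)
I would prove the two implications separately, writing $d = \alpha^{\vee}+\mathbb{Z}\Delta^{\vee}_P\in H_2(X)$ for the degree attached to $\alpha$. The forward direction is essentially immediate from Theorem 1. Suppose $\alpha$ is $P$-cosmall, so that $\alpha$ is a maximal root of $d$. Since $X(1)=1.P$ and $1\cdot s_{\alpha}=s_{\alpha}$, applying Theorem 1 with $w=1$ and the maximal root $\alpha$ gives
\[
\Gamma_{\alpha^{\vee}}(1.P)=\Gamma_{d}(X(1))=\Gamma_{d-\alpha^{\vee}}(X(s_{\alpha}))=\Gamma_{0}(X(s_{\alpha}))=X(s_{\alpha}),
\]
using $d-\alpha^{\vee}=0$ and the fact that the degree-$0$ curve neighborhood of a variety is the variety itself. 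Thus the whole content of the statement is the reverse implication.

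\textbf{Reduction of the reverse direction.}
I would argue the contrapositive. Assume $\alpha$ is not $P$-cosmall; then $\alpha$ is not maximal in $\{\beta\in R^{+}\setminus R^{+}_P\mid \beta^{\vee}+\mathbb{Z}\Delta^{\vee}_P\le d\}$, so there is a maximal root $\gamma$ of $d$ with $\gamma>\alpha$ in the root order. Applying Theorem 1 again with $w=1$ yields $\Gamma_{\alpha^{\vee}}(1.P)=\Gamma_{d-\gamma^{\vee}}(X(s_{\gamma}))$, and since $\Gamma_{e}(\Omega)\supseteq\Omega$ for every effective degree $e$ (through any point there is a genus-$0$ degree-$e$ curve meeting $\Omega$), we get $\Gamma_{\alpha^{\vee}}(1.P)\supseteq X(s_{\gamma})$. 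Hence $\Gamma_{\alpha^{\vee}}(1.P)=X(s_{\alpha})$ would force $X(s_{\gamma})\subseteq X(s_{\alpha})$. It therefore suffices to establish the following \emph{Key Lemma}, which directly contradicts that inclusion: if $R$ is simply laced and $\alpha<\gamma$ in the root order with $\alpha,\gamma\in R^{+}\setminus R^{+}_P$, then $X(s_{\alpha})\subsetneq X(s_{\gamma})$.

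\textbf{Proof plan for the Key Lemma.}
First I would pass from the root order to Bruhat order on $W$. Choose a saturated chain $\alpha=\beta_{0}\lessdot\beta_{1}\lessdot\cdots\lessdot\beta_{k}=\gamma$ in the root poset, so $\beta_{i+1}=\beta_{i}+\delta_{i}$ with $\delta_{i}$ simple; such a chain exists because in a finite root system the root order is generated by the covers $\beta\lessdot\beta+\delta$ ($\delta\in\Delta$, $\beta+\delta\in R$). As $R$ is simply laced and $\beta_{i}+\delta_{i}$ is a root, $(\beta_{i},\delta_{i}^{\vee})=-1$, so $\beta_{i+1}=s_{\delta_{i}}(\beta_{i})$ and $s_{\beta_{i+1}}=s_{\delta_{i}}s_{\beta_{i}}s_{\delta_{i}}$. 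A short length check (using $s_{\beta_{i}}(\delta_{i})=\beta_{i+1}>0$ and $s_{\delta_{i}}s_{\beta_{i}}(\delta_{i})=\beta_{i}>0$) shows $l(s_{\beta_{i+1}})=l(s_{\beta_{i}})+2$, so deleting the two outer letters $s_{\delta_{i}}$ from the reduced word exhibits $s_{\beta_{i}}<s_{\beta_{i+1}}$ in Bruhat order. Composing along the chain gives $s_{\alpha}<s_{\gamma}$ in $W$, and projecting to $W/W_P$ (which preserves Bruhat order) gives $X(s_{\alpha})\subseteq X(s_{\gamma})$. To upgrade to strict containment I must show $X(s_{\alpha})\ne X(s_{\gamma})$, i.e. $s_{\alpha}.P\ne s_{\gamma}.P$, i.e. $s_{\gamma}s_{\alpha}\notin W_P$. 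Fixing a weight $\lambda_P$ with $W$-stabilizer exactly $W_P$ (for instance $\lambda_P=\sum_{\beta\in\Delta\setminus\Delta_P}\omega_{\beta}$), one has $s_{\gamma}s_{\alpha}\in W_P$ iff $s_{\alpha}\lambda_P=s_{\gamma}\lambda_P$ iff $\langle\lambda_P,\alpha^{\vee}\rangle\,\alpha=\langle\lambda_P,\gamma^{\vee}\rangle\,\gamma$; since both coefficients are strictly positive for roots outside $R_P$, this forces $\alpha=\gamma$, a contradiction. Hence $X(s_{\alpha})\subsetneq X(s_{\gamma})$.

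\textbf{Main obstacle.}
The forward direction and the reduction are routine; the real content is the Key Lemma, and within it the genuinely delicate point is the \emph{strictness} of the containment after projecting from $W$ to $W/W_P$. Monotonicity of $\beta\mapsto s_{\beta}$ from the root poset into Bruhat order is clean in the simply-laced case, and Bruhat order descends to $W^{P}$, but a priori two distinct comparable roots could collapse to the same coset and destroy the strict inequality. Ruling this out is exactly the distinctness statement $s_{\alpha}.P\ne s_{\gamma}.P$, which I expect to be the crux and which the weight $\lambda_P$ argument is designed to settle. A secondary point to verify carefully is the existence of the saturated chain in the root poset, together with the observation that the simply-laced hypothesis is precisely what guarantees the uniform jump $l(s_{\beta_{i+1}})=l(s_{\beta_{i}})+2$ at each step.
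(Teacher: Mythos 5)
Your proof is correct, but it takes a genuinely different route from the paper's. The paper factors the conjecture through its Theorem 2, which holds in all Lie types: the condition $\Gamma_{\alpha^{\vee}}(1.P)=X(s_{\alpha})$ is translated into the coset identity $s_{\alpha}W_P=z^{P}_{d}W_P$, and $P$-cosmallness is shown equivalent to this identity together with the greedy decomposition of $d$ having length $1$; simple-lacedness then enters only through Lemma 2, where $\gamma\ge\alpha$ forces $\gamma^{\vee}\ge\alpha^{\vee}$, hence $\gamma^{\vee}+\mathbb{Z}\Delta^{\vee}_P=d$ and the greedy decomposition is the single root $(\gamma)$. You instead bypass the greedy-decomposition machinery of equation~(\ref{eq1}) entirely: after the shared, routine forward direction via Theorem 1, you run a contrapositive in which Theorem 1 plus the containment $\Gamma_{e}(\Omega)\supseteq\Omega$ gives $X(s_{\gamma})\subseteq\Gamma_{\alpha^{\vee}}(1.P)$ for a maximal root $\gamma>\alpha$, and the contradiction comes from your Key Lemma that root order implies strict Bruhat order on the corresponding Schubert varieties in the simply-laced case --- here simple-lacedness enters via $\langle\beta_{i},\delta_{i}^{\vee}\rangle=-1$ and the uniform jump $l(s_{\beta_{i+1}})=l(s_{\beta_{i}})+2$ along a saturated chain, playing the role that $\gamma^{\vee}\ge\alpha^{\vee}$ plays in the paper. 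Notably, your strictness step ($s_{\alpha}W_P\ne s_{\gamma}W_P$ via the weight $\lambda_P$) is precisely a proof of the paper's Lemma 1, which the paper imports from \cite{FW}; you could shorten by citing it. As for what each approach buys: the paper's route is shorter given the machinery of \cite{BM} and isolates exactly what simple-lacedness contributes (greedy length $1$), with Theorem 2 as a type-free byproduct; your route is more self-contained, relying only on Theorem 1, standard Bruhat combinatorics, and the elementary containment property of curve neighborhoods, and your monotonicity lemma ($\beta\le\gamma$ implies $s_{\beta}\le s_{\gamma}$ in the simply-laced case) has independent interest. In a final write-up you should tie down two details you flagged: the existence of the saturated chain (standard: if $\beta<\gamma$ then $(\gamma-\beta,\delta)>0$ for some simple $\delta$ occurring in $\gamma-\beta$, so either $\gamma-\delta$ or $\beta+\delta$ is a positive root strictly between them, and one inducts on height) and the degenerate case $d-\gamma^{\vee}=0$, where $\Gamma_{0}(X(s_{\gamma}))=X(s_{\gamma})$ makes the needed containment trivial.
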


The conjecture has been proven in its entirety in the case that $P = B$ in \cite{BM}. Note that the result for the forward direction is immediate through an application of Theorem 1. The full proof of the conjecture is given in Section 2, which relies on the following theorem. 

First we define a \emph{greedy decomposition} of $d \in H_2(X)$ to be a sequence of positive roots $(\alpha_1,\alpha_2,\dots,\alpha_k)$ such that $\alpha_1 \in  R^+\setminus R^{+}_P$ is a maximal root of $d$ and $(\alpha_2,\dots,\alpha_k)$ is a greedy decomposition of $d-\alpha^{\vee}_1 \in H_2(X)$. Note that the greedy decomposition of $d$ is unique up to a reordering \cite{BM}. Suppose $(\alpha_1,\alpha_2,\dots,\alpha_k)$ is a greedy decomposition of $d$. Then the coset $z^{P}_{d}W_P$ in $W/W_P$ is defined as 

\begin{equation} \label{eq1}
    z^{P}_dW_P = s_{\alpha_1} \cdot s_{\alpha_2} \cdot \ldots \cdot s_{\alpha_k}W_P
\end{equation}
where multiplication is defined by the \emph{Hecke product}.

\begin{theorem}
The root $\alpha \in R^+ \setminus R^+_P$ is $P$-cosmall if and only if $\Gamma_{\alpha^{\vee}}(1.P) = X(s_{\alpha})$ and the greedy decomposition of $d$ has length 1.
\end{theorem}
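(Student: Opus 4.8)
The plan is to write $d = \alpha^\vee + \mathbb{Z}\Delta^\vee_P \in H_2(X)$ and prove the two implications separately, with essentially all of the substance concentrated in a single injectivity statement for the map $\gamma \mapsto s_\gamma W_P$. Throughout I would use the dictionary from the introduction: $\Gamma_d(1.P) = X(z^P_d)$, the coset formula \eqref{eq1}, and the fact that Schubert varieties in $X$ are indexed by $W/W_P$ via their minimal representatives, so that $X(u) = X(v)$ iff $uW_P = vW_P$.

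For the forward direction, suppose $\alpha$ is $P$-cosmall, i.e.\ $\alpha$ is a maximal root of $d$. Then $d - \alpha^\vee$ is the zero class, and no $\gamma \in R^+\setminus R^+_P$ can satisfy $\gamma^\vee + \mathbb{Z}\Delta^\vee_P = 0$ (this would force $\gamma \in R_P$), so the zero class has only the empty greedy decomposition. Hence $(\alpha)$ is a greedy decomposition of $d$, and by uniqueness up to reordering \emph{every} greedy decomposition of $d$ has length $1$. Formula \eqref{eq1} then gives $z^P_d W_P = s_\alpha W_P$, whence $\Gamma_{\alpha^\vee}(1.P) = X(z^P_d) = X(s_\alpha)$. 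This yields both conclusions at once, so the forward direction is essentially immediate.

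For the backward direction, assume $\Gamma_{\alpha^\vee}(1.P) = X(s_\alpha)$ and that the greedy decomposition of $d$ has length $1$, say $(\beta)$. By definition $\beta \in R^+\setminus R^+_P$ is a maximal root of $d$ with $d - \beta^\vee = 0$, and \eqref{eq1} gives $\Gamma_{\alpha^\vee}(1.P) = X(z^P_d) = X(s_\beta)$. Comparing with the hypothesis forces $X(s_\alpha) = X(s_\beta)$, and since distinct cosets give distinct Schubert varieties this yields $s_\alpha W_P = s_\beta W_P$. It then remains to upgrade this equality of cosets to the equality $\alpha = \beta$ of roots; once that is in hand, $\alpha = \beta$ is a maximal root of $d$ and therefore $P$-cosmall, finishing the proof.

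The key step, and the one I expect to be the main obstacle, is precisely this upgrade: that $\gamma \mapsto s_\gamma W_P$ is injective on $R^+\setminus R^+_P$, i.e.\ $s_\gamma W_P = s_\delta W_P$ with $\gamma,\delta \in R^+\setminus R^+_P$ implies $\gamma = \delta$. I would prove it by a fixed-space argument. The relation $s_\gamma W_P = s_\delta W_P$ is exactly $s_\gamma s_\delta \in W_P$. If $\gamma \neq \delta$, then $\gamma,\delta$ are linearly independent (distinct positive roots in a reduced system), so $s_\gamma s_\delta$ is a nontrivial orthogonal transformation of the plane $V = \mathbb{R}\gamma + \mathbb{R}\delta$ with no nonzero fixed vector in $V$, while acting as the identity on $V^\perp$; thus its fixed space is exactly $V^\perp$, of codimension $2$. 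On the other hand every element of $W_P$ fixes $(\mathbb{R}\Delta_P)^\perp$ pointwise. Hence $s_\gamma s_\delta \in W_P$ would force $(\mathbb{R}\Delta_P)^\perp \subseteq V^\perp$, i.e.\ $V \subseteq \mathbb{R}\Delta_P$, giving $\gamma \in R \cap \mathbb{R}\Delta_P = R_P$, which contradicts $\gamma \in R^+\setminus R^+_P$. Applying this to $s_\alpha s_\beta \in W_P$ gives $\alpha = \beta$. The only inputs needing care are the two standard identifications $R \cap \mathbb{R}\Delta_P = R_P$ (root coordinates are integral) and the bijection between Schubert varieties and cosets in $W/W_P$, both of which I would cite as known.
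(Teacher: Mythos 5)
Your proposal is correct, and at the top level it follows the same skeleton as the paper's proof: both reduce, via $\Gamma_d(1.P) = X(z^P_d)$ and the coset formula \eqref{eq1}, to the equality of cosets $s_\alpha W_P = s_\gamma W_P$, and then conclude $\alpha = \gamma$ from the injectivity of $\gamma \mapsto s_\gamma W_P$ on $R^+ \setminus R^+_P$. The genuine difference is in how that injectivity is handled: the paper simply cites it as Lemma 1, quoted from \cite{FW} without proof, whereas you prove it from scratch with the fixed-space argument. Your argument is sound: for distinct $\gamma, \delta \in R^+$ the roots are non-proportional (the system is reduced and both are positive), so $s_\gamma s_\delta$ restricts to a nontrivial rotation of $V = \mathbb{R}\gamma + \mathbb{R}\delta$ and its fixed space is exactly $V^\perp$; since every element of $W_P$ fixes $(\mathbb{R}\Delta_P)^\perp$ pointwise, $s_\gamma s_\delta \in W_P$ forces $V \subseteq \mathbb{R}\Delta_P$, and $R \cap \mathbb{R}\Delta_P = R \cap \mathbb{Z}\Delta_P = R_P$ by integrality of the coefficients in the simple-root basis, contradicting $\gamma \notin R_P$. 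You also make explicit a step the paper glosses in the forward direction, namely that the zero degree admits only the empty greedy decomposition (one should say no $\gamma \in R^+\setminus R^+_P$ has coroot class $\leq 0$ rather than $= 0$, but this is immediate since the class of $\gamma^\vee$ has nonnegative, not-all-zero coefficients on the simple coroots outside $\Delta^\vee_P$). What each approach buys: the paper's version is shorter and defers the only nontrivial input to the literature; yours is fully self-contained, and your proof of the injectivity lemma is elementary, type-independent, and arguably more illuminating than a citation.
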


Next we provide alternative conditions for a root to be $P$-cosmall by considering the set $\Delta(\alpha) = \{\beta \in \Delta \mid \alpha + \beta \in R\}$ along with the set $\Delta_P$. This approach leads to more explicit criterion for a root to be $P$-cosmall. We state this result in our third and final theorem, which is a type independent result.

\begin{theorem}
Let $\alpha \in R^+ \setminus R^+_P$ be a cosmall root. Then $\alpha$ is $P$-cosmall if and only if $\Delta(\alpha) \cap \Delta_P = \emptyset$.
\end{theorem}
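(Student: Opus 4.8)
The plan is to translate $P$-cosmallness into an explicit coroot inequality and compare it root-by-root with the cosmallness hypothesis. Unwinding the definition, $\alpha$ is $P$-cosmall exactly when there is no $\gamma\in R^+\setminus R^+_P$ with $\gamma>\alpha$ (in the root order, i.e. $\gamma-\alpha$ a nonnegative combination of simple roots) and $\gamma^\vee\leq\alpha^\vee$ in $H_2(X)$. Writing $\mu=\sum_{\delta}c^\mu_\delta\,\delta$, so that the coefficient of $\delta^\vee$ in $\mu^\vee$ is $c^\mu_\delta\,(\delta,\delta)/(\mu,\mu)$, the condition $\gamma^\vee\leq\alpha^\vee$ in $H_2(X)$ reads $c^\gamma_\delta/(\gamma,\gamma)\leq c^\alpha_\delta/(\alpha,\alpha)$ for every $\delta\in\Delta\setminus\Delta_P$; cosmallness of $\alpha$ says the same with the inequality required for \emph{all} $\delta\in\Delta$. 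I will call such a $\gamma$ a \emph{competitor}. The first ingredient is a sign lemma: \emph{if $\alpha$ is cosmall and $\beta\in\Delta(\alpha)$, then $\langle\alpha,\beta^\vee\rangle<0$}, equivalently $s_\beta(\alpha)=\alpha-\langle\alpha,\beta^\vee\rangle\beta>\alpha$. I would prove this by contradiction: if $\langle\alpha,\beta^\vee\rangle\geq0$ while $\alpha+\beta\in R$, the $\beta$-root-string through $\alpha$ forces $\beta\in\mathrm{supp}(\alpha)$ (otherwise $\alpha-\beta$ would be a root with coefficients of mixed sign), and then a direct computation of $(\alpha+\beta)^\vee-\alpha^\vee$ shows $(\alpha+\beta)^\vee\leq\alpha^\vee$ with $\alpha+\beta>\alpha$, contradicting cosmallness.

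Granting the sign lemma, the ``only if'' direction is immediate in contrapositive form: if $\beta\in\Delta(\alpha)\cap\Delta_P$, set $\gamma=s_\beta(\alpha)$, so that $\gamma>\alpha$ by the sign lemma, $\gamma\in R^+\setminus R^+_P$ since reflection by the simple root $\beta\in\Delta_P$ fixes all coefficients on $\Delta\setminus\Delta_P$, and $\gamma^\vee=s_\beta(\alpha^\vee)\equiv\alpha^\vee \pmod{\mathbb{Z}\Delta^\vee_P}$; thus $\gamma$ is a competitor and $\alpha$ is not $P$-cosmall. For the ``if'' direction I argue the contrapositive: assuming $\alpha$ not $P$-cosmall, fix a competitor $\gamma$ and produce an element of $\Delta(\alpha)\cap\Delta_P$. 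Choosing $\delta\in\mathrm{supp}(\alpha)\setminus\Delta_P$ (nonempty as $\alpha\notin R^+_P$), the competitor inequality together with $c^\gamma_\delta\geq c^\alpha_\delta$ yields $(\gamma,\gamma)\geq(\alpha,\alpha)$, so $\gamma$ is at least as long as $\alpha$. If the lengths are equal, the two inequalities force $c^\gamma_\delta=c^\alpha_\delta$ for every $\delta\in\Delta\setminus\Delta_P$, whence $\gamma-\alpha$ is a nonnegative combination of simple roots in $\Delta_P$; the standard fact that $\gamma$ is reached from $\alpha$ by successively adding simple roots, all lying in $\mathrm{supp}(\gamma-\alpha)$, then gives a first step $\alpha+\delta_1\in R$ with $\delta_1\in\Delta_P$, i.e. $\delta_1\in\Delta(\alpha)\cap\Delta_P$.

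It remains to treat the case $(\gamma,\gamma)>(\alpha,\alpha)$, which is the main obstacle and can only occur outside the simply laced setting, where all roots share one length and the equal-length argument already proves the theorem. Here $\alpha$ is short. When $\alpha$ is simple, the competitor inequality forces $\mathrm{supp}(\gamma)\subseteq\{\alpha\}\cup\Delta_P$, and since the first added simple root $\delta_1$ satisfies $\alpha+\delta_1\in R$ it cannot equal $\alpha$, hence lies in $\Delta_P$, finishing this subcase. The genuinely delicate point is to exclude a short \emph{non-simple} cosmall $\alpha$ admitting a strictly longer competitor. I expect this to follow from the structural lemma that \emph{a cosmall root is either simple or long} (equivalently, a short non-simple root is never cosmall): given such an $\alpha$ one locates a simple $\delta$ with $\alpha+\delta$ long and checks $(\alpha+\delta)^\vee\leq\alpha^\vee$, contradicting cosmallness, so that a long $\alpha$ admits no strictly longer competitor and the previous case applies. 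Establishing this length classification of cosmall roots uniformly across the non-simply-laced types is the step I anticipate requiring the most care, most likely via the root-string and coroot estimates already used in the sign lemma.
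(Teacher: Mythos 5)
Most of your argument is sound and, where it works, runs parallel to (or even streamlines) the paper's: your sign lemma is true and correctly sketched, the reflection competitor $\gamma = s_\beta(\alpha)$ handles the ``only if'' direction (the paper instead takes the top $\alpha + k\beta$ of the $\beta$-string and invokes its Lemma 3, that the end of a root string is at least as long as $\alpha$), and your equal-length and short-simple cases of the ``if'' direction are correct. The gap is the structural lemma you lean on in the final case: it is \emph{false} that a cosmall root is either simple or long. The paper's own type $C$ table exhibits the counterexample: in $C_l$ every root $e_i - e_j = \beta_i + \cdots + \beta_{j-1}$, $1 \leq i < j \leq l$, is cosmall, and for $j > i+1$ it is short and non-simple. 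Concretely, take $\alpha = e_1 - e_3 = \beta_1 + \beta_2$ in $C_3$: the only roots strictly above $\alpha$ are $e_1 + e_3$, $e_1 + e_2$, and $2e_1$, and each of their coroots has a strictly positive coefficient on $\beta_3^{\vee}$, whereas $\alpha^{\vee} = \beta_1^{\vee} + \beta_2^{\vee}$ does not, so no root above $\alpha$ has coroot $\leq \alpha^{\vee}$ and $\alpha$ is indeed cosmall. Your proposed proof of the classification also visibly fails on this $\alpha$: here $\Delta(\alpha) = \{\beta_3\}$ and $\alpha + \beta_3 = e_1 + e_3$ is again short, so there is no simple $\delta$ with $(\alpha+\delta)^{\vee} \leq \alpha^{\vee}$ --- necessarily, since $\alpha$ really is cosmall. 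Thus the case you yourself flag as the delicate one, a short non-simple cosmall $\alpha$ facing a strictly longer competitor, is not vacuous and is left unproved.

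What actually closes this case --- and is the paper's key input --- is a weaker and different statement: if $\alpha$ is a short cosmall root and $\beta \in \Delta(\alpha)$, then $\beta \nleq \alpha$ (the paper's Lemma 4, which is verified by inspection of the classical tables and by computer for $F_4$ and $G_2$; note $\beta_3 \nleq e_1 - e_3$ in the example above). Granting it, your own coefficient computation finishes: in the case $(\gamma,\gamma) > (\alpha,\alpha)$, take the first simple root $\beta$ in a chain from $\alpha$ to $\gamma$, so $\beta \in \Delta(\alpha)$ and $\alpha + \beta \leq \gamma$; if $\beta \notin \Delta_P$, then comparing coefficients of $\beta^{\vee}$ in $\gamma^{\vee} + \mathbb{Z}\Delta_P^{\vee} \leq \alpha^{\vee} + \mathbb{Z}\Delta_P^{\vee}$ gives $c^{\gamma}_{\beta}/(\gamma,\gamma) \leq c^{\alpha}_{\beta}/(\alpha,\alpha)$ with $c^{\gamma}_{\beta} \geq c^{\alpha}_{\beta} + 1$, forcing $c^{\alpha}_{\beta} \geq 1$, i.e.\ $\beta \leq \alpha$, contradicting the lemma; hence $\beta \in \Delta(\alpha) \cap \Delta_P$, completing your contrapositive. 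Be aware that the paper offers no uniform proof of this lemma either --- it is case-checked type by type --- so repairing your argument requires substituting this genuinely different (and, as the paper stands, partly computational) input for the false length classification, not merely adding care to the root-string estimates.
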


Our paper is organized in the following manner. In section 2, we prove Theorem 2 and show that it implies the conjecture. Theorem 3 is proven in section 3. In section 4, we provide tables that compute $\Delta(\alpha)$ for cosmall roots in the case of classical root systems. This makes it easy to check if a root is $P$-cosmall. 

This paper is a product of the DIMACS REU program and we are grateful to the DIMACS REU and the Rutgers Mathematics Department for their funding and support. Funding was also provided by the NSF through the grant DMS-1503662. We would like to thank our advisor Professor Anders Buch for introducing us to this subject as well as his PhD student Sjuvon Chung for his helpful discussions. We would like to mention Professor Buch's maple program Equivariant Schubert Calculator \cite{EC}, which allowed us to come up with and verify some of our initial results and undoubtedly will be useful for further study in this subject. During the course of writing this paper, we became aware that the conjecture from \cite{BM} has been proven in Christoph B{\"a}rligea's thesis \cite{CB}, along with several other interesting results about $P$-cosmall roots. Our Theorem 3 is not explicitly stated in his thesis though it can possibly be derived using his work on $P$-indecomposable roots.

\bigskip

\section{Classification of P-cosmall roots}

In this section, we prove Theorem 2 and derive the conjecture from \cite{BM} as a consequence. For this section, we fix a positive root $\alpha \in R^{+}\setminus R^{+}_P$ and set $d = \alpha^{\vee} + \mathbb{Z}\Delta^{\vee}_P \in H_2(X;\mathbb{Z})$. Our goal is to prove the previously stated conjecture. We now present a proof of the conjecture in its fullest generality. We require the following lemma which is proven in \cite{FW}.

\begin{lemma}
Let $\alpha  \in R^+\setminus R^{+}_P$. Then $\alpha$ is uniquely determined by the coset $s_{\alpha}W_P \in W/W_P$.
\end{lemma}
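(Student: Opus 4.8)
The plan is to work entirely in the root lattice and convert the coset relation into a constraint on the Cartan integers between $\alpha$ and $\alpha'$. Suppose $\alpha,\alpha'\in R^+\setminus R^+_P$ satisfy $s_\alpha W_P=s_{\alpha'}W_P$. Since $s_\alpha^{-1}=s_\alpha$, this means $u:=s_\alpha s_{\alpha'}\in W_P$, and because $W_P$ is a group its inverse $u^{-1}=s_{\alpha'}s_\alpha$ also lies in $W_P$. Writing $s_{\alpha'}=s_\alpha u$ and applying both sides to $\alpha'$ gives $u(\alpha')=-s_\alpha(\alpha')=a\alpha-\alpha'$, where $a:=\langle\alpha',\alpha^\vee\rangle$; symmetrically, $u^{-1}(\alpha)=a'\alpha'-\alpha$ with $a':=\langle\alpha,\alpha'^\vee\rangle$. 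So the first concrete step is just unwinding the coset identity into these two explicit equations.

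Next I would exploit that every element of $W_P$ acts trivially on the quotient lattice $\mathbb{Z}\Delta/\mathbb{Z}\Delta_P$: for a simple reflection $s_\beta$ with $\beta\in\Delta_P$ one has $s_\beta(\lambda)-\lambda\in\mathbb{Z}\beta\subseteq\mathbb{Z}\Delta_P$, and this property is stable under composition. Applying it to the two equations above yields $a\alpha-2\alpha'\in\mathbb{Z}\Delta_P$ and $a'\alpha'-2\alpha\in\mathbb{Z}\Delta_P$. Let $\pi\colon\mathbb{Z}\Delta\to\mathbb{Z}\Delta/\mathbb{Z}\Delta_P$ be the projection; since $\Delta_P$ is part of the $\mathbb{Z}$-basis $\Delta$ of the root lattice, the target is \emph{free} abelian. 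Projecting gives $a\,\pi(\alpha)=2\,\pi(\alpha')$ and $a'\,\pi(\alpha')=2\,\pi(\alpha)$, which combine to $(aa'-4)\,\pi(\alpha)=0$. Because $\alpha\in R^+\setminus R^+_P$ is not in $R_P=R\cap\mathbb{Z}\Delta_P$, we have $\pi(\alpha)\neq 0$, and freeness of the quotient forces the exact equality $aa'=4$.

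Finally I would recognize $aa'=\langle\alpha',\alpha^\vee\rangle\langle\alpha,\alpha'^\vee\rangle=4\cos^2\theta$, where $\theta$ is the angle between $\alpha$ and $\alpha'$. In a reduced crystallographic root system this product lies in $\{0,1,2,3,4\}$ and equals $4$ only when $\cos^2\theta=1$, i.e.\ when $\alpha$ and $\alpha'$ are parallel; since both are positive this forces $\alpha=\alpha'$, completing the argument.

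The step I expect to be the crux is the passage to the quotient lattice in the correct form. The naive temptation is to use the degree data in $H_2(X)=\mathbb{Z}\Delta^\vee/\mathbb{Z}\Delta_P^\vee$, but that records only $\alpha^\vee$ modulo $\mathbb{Z}\Delta_P^\vee$ and is too lossy to distinguish roots. Working instead in the \emph{root} lattice quotient $\mathbb{Z}\Delta/\mathbb{Z}\Delta_P$, and checking both that it is torsion-free and that $\pi(\alpha)\neq0$, is what upgrades the relation $(aa'-4)\pi(\alpha)=0$ into the rigid numerical identity $aa'=4$; everything else is routine once the two symmetric equations are in hand.
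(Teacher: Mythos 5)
Your argument is correct, and each step checks out: the coset identity does give $u=s_\alpha s_{\alpha'}\in W_P$ together with $u^{-1}=s_{\alpha'}s_\alpha\in W_P$; the equations $u(\alpha')=\langle\alpha',\alpha^\vee\rangle\alpha-\alpha'$ and $u^{-1}(\alpha)=\langle\alpha,\alpha'^\vee\rangle\alpha'-\alpha$ are right; $W_P$ indeed acts trivially on $\mathbb{Z}\Delta/\mathbb{Z}\Delta_P$; that quotient is free abelian on $\Delta\setminus\Delta_P$; and $\pi(\alpha)\neq 0$ precisely because $R^+_P=R^+\cap\mathbb{Z}\Delta_P$, so torsion-freeness upgrades $(aa'-4)\pi(\alpha)=0$ to $aa'=4$, which forces $\alpha$ and $\alpha'$ to be proportional, and reducedness (automatic since $G$ is semisimple) plus positivity gives $\alpha=\alpha'$. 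Be aware, though, that the paper itself offers no proof of this lemma to compare against: it is imported wholesale from \cite{FW}. The standard argument in that vein is shorter than yours: pick a point $\lambda$ in the dominant chamber with $(\lambda,\beta)=0$ for $\beta\in\Delta_P$ and $(\lambda,\beta)>0$ for $\beta\in\Delta\setminus\Delta_P$, so that $W_P=\{w\in W\mid w\lambda=\lambda\}$; then $s_\alpha W_P=s_{\alpha'}W_P$ gives $s_\alpha\lambda=s_{\alpha'}\lambda$, hence $\langle\lambda,\alpha^\vee\rangle\,\alpha=\langle\lambda,\alpha'^\vee\rangle\,\alpha'$, and both coefficients are strictly positive because $\alpha,\alpha'$ have a positive coefficient on some simple root outside $\Delta_P$, so the two roots are positively proportional and hence equal. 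Your lattice-quotient route trades that one geometric choice of $\lambda$ for a purely $\mathbb{Z}$-linear computation in $\mathbb{Z}\Delta/\mathbb{Z}\Delta_P$; what it buys is a self-contained replacement for the citation that isolates exactly where each hypothesis enters ($\alpha\notin R_P$ via $\pi(\alpha)\neq0$, reducedness and positivity only in the final step), at the cost of needing both congruences and the torsion-freeness observation, which the stabilizer argument gets for free.
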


\begin{proof}[Proof of Theorem 2]
Note that the condition $\Gamma_{\alpha^{\vee}}(1.P) = X(s_{\alpha})$ is equivalent to saying $s_{\alpha}W_P = z^{P}_{d} W_P$. Suppose that $\alpha$ is $P$-cosmall. By definition of $P$-cosmall, $(\alpha)$ is a greedy decomposition of $d$ so $s_{\alpha}W_P = z^{P}_{d}W_P$. Now suppose that $s_{\alpha}W_P = z^{P}_dW_P$ and the greedy decomposition of $d$ has length 1. If the greedy decomposition of $d$ is $(\gamma)$, then  $s_{\alpha}W_P = z^{P}_dW_P = s_{\gamma}W_P$. However by Lemma 1, we must have that $\alpha = \gamma$. Therefore it follows that $\alpha$ is $P$-cosmall.
\end{proof}

The Conjecture from \cite{BM} follows from Theorem 2 together with the following Lemma. Note that Theorem 2 holds in all Lie types. 

\begin{lemma}
Assume that $R$ is simply laced. Then the greedy decomposition of $d$ has length 1. 
\end{lemma}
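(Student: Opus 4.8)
The plan is to exhibit a single greedy decomposition of $d = \alpha^{\vee} + \mathbb{Z}\Delta^{\vee}_P$ that has length $1$; since greedy decompositions are unique up to reordering, producing one of length $1$ forces every greedy decomposition to have length $1$. A greedy decomposition of length $1$ is precisely a maximal root $\alpha_1$ of $d$ with $d - \alpha_1^{\vee} = 0$ in $H_2(X)$, so the whole task reduces to producing a maximal root of $d$ whose coroot already represents all of $d$.

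First I would record the comparison underlying the partial order on $H_2(X) = \mathbb{Z}\Delta^{\vee}/\mathbb{Z}\Delta^{\vee}_P$: writing a class in the coordinates given by the images $\bar\beta^{\vee}$ for $\beta \in \Delta \setminus \Delta_P$, one has $d' \leq d$ exactly when each such coordinate of $d'$ is at most the corresponding coordinate of $d$. Consider the set $S = \{\beta \in R^+ \setminus R^{+}_P \mid \beta^{\vee} + \mathbb{Z}\Delta^{\vee}_P \leq d\}$, whose maximal elements in the root poset are by definition the maximal roots of $d$. Since $\alpha^{\vee} + \mathbb{Z}\Delta^{\vee}_P = d$, we have $\alpha \in S$, and as $S$ is finite I would pick a maximal chain of $S$ starting at $\alpha$ and let $\alpha_1$ be its top; this $\alpha_1$ is a maximal element of $S$ with $\alpha_1 \geq \alpha$ in the root poset, and any element of $S$ strictly above $\alpha_1$ would also lie above $\alpha$, so $\alpha_1$ is in fact a maximal root of $d$.

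The heart of the argument is to show $d - \alpha_1^{\vee} = 0$, and this is where the simply-laced hypothesis enters decisively. Because $R$ is simply laced, every root equals its own coroot, so the coroot lattice coincides with the root lattice and the two relevant orders --- the root poset and the effectivity order on $H_2(X)$ --- are induced by the same coordinates. Concretely, $\alpha_1 \geq \alpha$ in the root poset gives that the coefficient of each simple root in $\alpha_1$ is at least its coefficient in $\alpha$, while $\alpha_1 \in S$ gives that the coefficient of each $\beta \in \Delta \setminus \Delta_P$ in $\alpha_1$ is at most its coefficient in $\alpha$. Combining these forces equality on all of $\Delta \setminus \Delta_P$, whence $\alpha_1^{\vee} + \mathbb{Z}\Delta^{\vee}_P = d$, i.e.\ $d - \alpha_1^{\vee} = 0$. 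Thus $(\alpha_1)$ is a greedy decomposition of length $1$, and the lemma follows by uniqueness.

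The main obstacle is exactly the compatibility used in the previous step. In a non-simply-laced system the passage from a root to its coroot rescales coefficients by length ratios, so the root poset and the effectivity order on $H_2(X)$ need not agree, and moving up in the root poset can strictly increase the $H_2$-class; one could then leave the fiber over $d$ and the ``go up and compare coefficients'' step would break down. The simply-laced assumption is precisely what collapses the two orders, so I would be careful to phrase that comparison purely in the shared coordinates $\bar\beta^{\vee}$, $\beta \in \Delta \setminus \Delta_P$, making transparent that it is the identification of roots with coroots, and nothing else, that drives the conclusion.
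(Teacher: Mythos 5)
Your proof is correct and takes essentially the same route as the paper: both select a maximal root $\gamma \geq \alpha$ of $d$ (you make its existence explicit via a chain in the finite set $S$, which the paper leaves implicit) and use the simply-laced identification of roots with coroots to upgrade $\gamma \geq \alpha$ to $\gamma^{\vee} + \mathbb{Z}\Delta^{\vee}_P \geq d$, which together with $\gamma^{\vee} + \mathbb{Z}\Delta^{\vee}_P \leq d$ forces equality and hence a length-one greedy decomposition by uniqueness. The only difference is expository: you spell out the coordinatewise comparison on $\Delta \setminus \Delta_P$ that the paper compresses into the single line ``since $R$ is simply laced, $\gamma^{\vee} \geq \alpha^{\vee}$.''
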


\begin{proof}
Let $\gamma$ be any maximal root of $d$ satisfying $\gamma \geq \alpha$. Since $R$ is simply laced, it follows that $\gamma^{\vee} \geq \alpha^{\vee}$. Thus, we have that $\gamma^{\vee} + \mathbb{Z}\Delta^{\vee}_P = d$, so by definition and uniqueness, the greedy decomposition of $d$ is just $(\gamma)$.
\end{proof}

\section{Combinatorial Characterization of $P$-Cosmall roots}

We now provide an explicit combinatorial description of the $P$-cosmall roots of a root system $R$. Since $P$-cosmall roots are cosmall, we formulate our criteria for $P$-cosmall roots in terms of cosmall roots and the set $\Delta_P$. Given any positive root $\alpha$ define the set $\Delta(\alpha) = \{\beta \in \Delta | \alpha + \beta \in R\}$. Our proof of Theorem 3 relies on the following two lemmas.

\begin{lemma}
Let $\alpha,\beta \in R$ be arbitrary roots. Choose the maximal $k \in \mathbb{Z}$ such that $\alpha + k\beta \in R$. Then the length of $\alpha + k\beta$ is greater than or equal to the length of $\alpha$. 
\end{lemma}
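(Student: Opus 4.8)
The plan is to reduce the claim to a one-line inner-product computation by invoking the $\beta$-root string through $\alpha$. Recall the root-string theorem: for any roots $\alpha, \beta \in R$, the set of integers $n$ with $\alpha + n\beta \in R$ is an unbroken interval $-p \leq n \leq q$ with $p, q \geq 0$, and these integers satisfy $p - q = (\alpha, \beta^\vee) = \frac{2(\alpha,\beta)}{(\beta,\beta)}$. Since $\alpha = \alpha + 0\cdot\beta$ is a root, the integer $0$ lies in this interval, so the maximal $k$ with $\alpha + k\beta \in R$ is precisely the top of the string, $k = q \geq 0$.

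First I would set $\gamma = \alpha + q\beta$ and expand the squared length difference:
\[
(\gamma, \gamma) - (\alpha, \alpha) = 2q(\alpha,\beta) + q^2(\beta,\beta) = q(\beta,\beta)\left(\frac{2(\alpha,\beta)}{(\beta,\beta)} + q\right).
\]
Substituting the root-string identity $\frac{2(\alpha,\beta)}{(\beta,\beta)} = p - q$ collapses the parenthesized factor to $p$, yielding
\[
(\gamma,\gamma) - (\alpha,\alpha) = p\, q\, (\beta,\beta).
\]
Each factor is nonnegative: $p, q \geq 0$ by the root-string theorem and $(\beta,\beta) > 0$ because $\beta$ is a root. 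Hence $(\gamma,\gamma) \geq (\alpha,\alpha)$, which is exactly the assertion that the length of $\alpha + k\beta$ is at least the length of $\alpha$.

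There is no real obstacle once the root-string theorem is in hand; the two points to get right are conceptual rather than computational. The first is recognizing that the chosen $k$ is the \emph{top} endpoint $q$ of the string and hence is nonnegative. The second is the sign bookkeeping: the conclusion depends on the matching nonnegativity of the deficit $p$ at the other end. Geometrically this is transparent — the function $n \mapsto (\alpha + n\beta, \alpha + n\beta)$ is an upward-opening parabola whose vertex lies at the midpoint $\tfrac{q-p}{2}$ of the interval $[-p, q]$, so both endpoints $n = q$ and $n = -p$ are the longest roots in the string, while $\alpha$ sits at the interior point $n = 0$ and therefore cannot exceed them. The displayed computation simply makes this picture quantitative in the direction we need.
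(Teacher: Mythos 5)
Your proof is correct, and it rests on the same underlying structure as the paper's — the $\beta$-string through $\alpha$, whose two ends are its longest members — but the execution is genuinely different. The paper quotes from Humphreys the reflection fact $\sigma_\beta(\alpha + k\beta) = \alpha - l\beta$, deduces that $\alpha + \frac{k-l}{2}\beta$ is perpendicular to $\beta$, and compares $\lvert \alpha \rvert$ with $\lvert \alpha + k\beta \rvert$ via the Pythagorean theorem and the elementary bound $\lvert k-l \rvert \le k+l$; it is a geometric projection argument. You instead invoke the equivalent numerical form of the same string fact, $p - q = \langle \alpha, \beta^\vee \rangle$ (applying $\sigma_\beta$ to the top of the string and equating with the bottom recovers exactly this identity, so the two cited inputs are interchangeable), and expand the quadratic directly to get the exact formula $(\gamma,\gamma) - (\alpha,\alpha) = pq\,(\beta,\beta)$. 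Neither route is more general, but yours buys a sharper statement for free: equality holds precisely when $p = 0$ or $q = 0$, i.e.\ when $\alpha$ is itself an end of the string, a criterion the paper's chain of inequalities yields only after unwinding $\lvert k-l \rvert = k+l \iff kl = 0$. One caveat applies equally to both arguments: the unbroken-string theorem is standardly stated for $\beta \ne \pm\alpha$, and in the degenerate cases $\beta = \pm\alpha$ the maximal $k$ gives $\alpha + k\beta = \pm\alpha$, so the conclusion holds trivially there; your observation that $q \ge 0$ because $0$ lies in the string is a point the paper leaves implicit, and stating it is a small improvement in care.
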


\begin{proof}
Let $k,l \in \mathbb{Z}$ be maximal nonnegative integers such that $\alpha + k\beta, \alpha - l\beta \in R$, i.e. $\alpha + k\beta$ and $\alpha - l\beta$ are the ends of the $\beta$-string through $\alpha$. Let $\sigma_{\beta}$ be the reflection that sends $\beta \mapsto -\beta$ . It was shown in \cite{HUM} that $\sigma_{\beta}(\alpha+ k\beta) =\alpha - l\beta$. Therefore the vector $\alpha + \frac{k-l}{2}\beta$ is perpendicular to $\beta$. By the Pythagorean Theorem, we have $|\alpha|^2=|\alpha+ \frac{k-l}{2}\beta|^2+|\frac{k-l}{2}\beta|^2 \leq |\alpha+ \frac{k-l}{2}\beta|^2+|\frac{k+l}{2}\beta|^2 = |\alpha+ k\beta|^2$. 
\end{proof}

\begin{lemma}
Let $\alpha \in R^+$ be any short cosmall root and take $\beta \in \Delta(\alpha)$. Then $\beta \nleq \alpha$. 
\end{lemma}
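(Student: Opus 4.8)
The plan is to prove the contrapositive: assuming $\alpha\in R^+$ is short, $\beta\in\Delta(\alpha)$, and $\beta\le\alpha$, I will show $\alpha$ is \emph{not} cosmall, i.e.\ exhibit a root $\gamma>\alpha$ with $\gamma^{\vee}\le\alpha^{\vee}$, so that $\alpha$ fails to be maximal in $\{\delta\in R^+:\delta^{\vee}\le\alpha^{\vee}\}$. Since $\beta$ is simple, $\beta\le\alpha$ is equivalent to $c_\beta(\alpha)\ge 1$, where $c_{\gamma}(\delta)$ denotes the coefficient of a simple root in the simple-root expansion of $\delta$; so the hypotheses to exploit are $c_\beta(\alpha)\ge 1$ together with $\alpha+\beta\in R$.

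The main computational tool I would set up first is a coefficient description of the coroot order. Writing $\delta=\sum_i c_i(\delta)\beta_i$, a direct computation gives $\delta^{\vee}=\sum_i\frac{(\beta_i,\beta_i)}{(\delta,\delta)}c_i(\delta)\,\beta_i^{\vee}$, so that for roots $\gamma,\delta$ one has $\gamma^{\vee}\le\delta^{\vee}$ if and only if $(\delta,\delta)\,c_i(\gamma)\le(\gamma,\gamma)\,c_i(\delta)$ for every simple root $\beta_i$. I will apply this with $\delta=\alpha$.

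For the witness I would use the $\beta$-string through $\alpha$, say $\alpha-p\beta,\dots,\alpha+q\beta$ with $p,q\ge 0$ and $q\ge 1$ (as $\beta\in\Delta(\alpha)$), and take $\gamma=\alpha+q\beta$, its top. The reflection identity used in the proof of Lemma 3 yields $(\gamma,\gamma)=(\alpha,\alpha)+pq(\beta,\beta)$, and in particular $(\gamma,\gamma)\ge(\alpha,\alpha)$. Since $\gamma$ and $\alpha$ agree in every simple coordinate except $c_\beta(\gamma)=c_\beta(\alpha)+q$, the criterion above shows that $\gamma^{\vee}\le\alpha^{\vee}$ reduces to the single inequality $(\alpha,\alpha)\le p\,(\beta,\beta)\,c_\beta(\alpha)$. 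When $p\ge 1$ this holds immediately, because $c_\beta(\alpha)\ge 1$ and $(\beta,\beta)\ge(\alpha,\alpha)$ (as $\alpha$ is short). Thus whenever $\alpha-\beta\in R$, the top of the string is a root $>\alpha$ witnessing that $\alpha$ is not cosmall, and this case is finished.

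The hard case, and the step I expect to be the main obstacle, is $p=0$, i.e.\ $\alpha-\beta\notin R$. Because $\alpha$ is short, $|(\alpha,\beta^{\vee})|\le 1$, so here $q=1$, $(\alpha,\beta^{\vee})=-1$, and $\gamma=\alpha+\beta$ is again short; the top-of-string witness degenerates ($(\gamma,\gamma)=(\alpha,\alpha)$) and gives no contradiction, so one must find a genuinely longer witness off the $\beta$-string. The first reduction I would make is that $\mathrm{supp}(\alpha)$ must contain a long simple root: in each non-simply-laced type the short simple roots span a sub-diagram of type $A$, so if $\mathrm{supp}(\alpha)$ were all short then $R_{\mathrm{supp}(\alpha)}$ would be of type $A$, where $c_\beta(\alpha)\ge 1$ forces $\alpha+\beta\notin R$, contradicting $\beta\in\Delta(\alpha)$. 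Granting a long simple root in the support, the target becomes to produce a \emph{long} root $\gamma$ with $\alpha\le\gamma$ in the root order and $c_i(\gamma)\le\frac{(\gamma,\gamma)}{(\alpha,\alpha)}c_i(\alpha)$ for all $i$; by the criterion such a $\gamma$ again gives $\gamma^{\vee}\le\alpha^{\vee}$ and $\gamma>\alpha$. The natural candidate is the highest root $\theta$ of the irreducible subsystem $R_{\mathrm{supp}(\alpha)}$, which is long and satisfies $\theta>\alpha$; the difficulty is that $\theta$ need not satisfy the coefficient bound (for instance the short root $\frac12(e_1+e_2-e_3-e_4)$ in $F_4$ requires a smaller long root such as $e_1-e_3$ rather than the highest root). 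I therefore expect the crux to be the existence statement — that the ``coroot box'' between $\alpha$ and $\frac{(\gamma,\gamma)}{(\alpha,\alpha)}\alpha$ always contains a long root once $\mathrm{supp}(\alpha)$ meets the long simple roots — which I would try to establish uniformly by a short climb through a rank-two ($B_2$ or $G_2$) subsystem attached to a long simple root in the support, or, failing a type-independent argument, by a direct check in types $B$, $C$, $F_4$, and $G_2$.
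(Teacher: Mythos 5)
Your completed portion is sound: the coefficient criterion for the coroot order (for roots $\gamma,\alpha$, $\gamma^{\vee}\le\alpha^{\vee}$ iff $(\alpha,\alpha)\,c_i(\gamma)\le(\gamma,\gamma)\,c_i(\alpha)$ for all $i$) is correct, the string identity $(\gamma,\gamma)=(\alpha,\alpha)+pq(\beta,\beta)$ for $\gamma=\alpha+q\beta$ checks out, and your disposal of the case $\alpha-\beta\in R$ is a genuine proof: with $p\ge 1$, $c_\beta(\alpha)\ge 1$, and $(\beta,\beta)\ge(\alpha,\alpha)$ the single nontrivial coordinate inequality $(\alpha,\alpha)\le p(\beta,\beta)c_\beta(\alpha)$ holds, so the top of the $\beta$-string witnesses that $\alpha$ is not cosmall. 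You also correctly diagnose that when $p=0$ one gets $q=1$, $\gamma=\alpha+\beta$ short, and the string witness degenerates.

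The problem is that the $p=0$ case is exactly where the lemma's content lies, and you do not prove it --- you only state the needed existence claim (a long root $\gamma\ge\alpha$ with $c_i(\gamma)\le\frac{(\gamma,\gamma)}{(\alpha,\alpha)}c_i(\alpha)$ for all $i$) and name two strategies you ``would try.'' This case is nonempty: in type $C_l$ take $\alpha=e_i+e_j$ with $j\ge i+2$ and $\beta=\beta_{j-1}$, so $\alpha+\beta=e_i+e_{j-1}\in R$ but $\alpha-\beta\notin R$ (here $\gamma=2e_i$ works as witness), and your own $F_4$ example $\alpha=\frac12(e_1+e_2-e_3-e_4)$ with $\beta=\beta_3$ is of the same kind. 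Until the existence statement is established --- uniformly or otherwise --- the proposal is not a proof. It is worth noting that the paper itself offers no conceptual argument here: it observes the statement is vacuous in simply-laced types, reads off types $B$ and $C$ from the tables in Section 4, and checks $F_4$ and $G_2$ by computer. So your declared fallback (direct verification in $B$, $C$, $F_4$, $G_2$) would simply reproduce the paper's proof; your string argument for $p\ge 1$ is a nice uniform reduction beyond what the paper does, but as submitted the crux case remains a conjecture, which is a genuine gap.
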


\begin{proof}
Note that this statement is clear if $R$ is simply laced since all roots are long by convention. If $R$ is a classical root system of either type $B$ or $C$, then this statement can be verified easily through the tables provided in section 4. The lemma has been checked by computer for $F_4$ and $G_2$.
\end{proof}

\begin{proof}[Proof of Theorem 3]
Assume that $\alpha$ is $P$-cosmall and $\Delta(\alpha) \cap \Delta_P \neq \emptyset$. Choose $\beta \in \Delta(\alpha) \cap \Delta_P$. Choose $k \in \mathbb{N}$ maximal such that $\alpha + k\beta \in R^+$. Let $\gamma = \alpha + k\beta$. Then by Lemma 3, we have that $\lvert \gamma \rvert \geq \lvert \alpha \rvert$. This means that $\lvert \gamma^{\vee} \rvert \leq \lvert \alpha^{\vee} \rvert$. Using this fact along with $\beta^{\vee} \in \Delta^{\vee}_P$, we can conclude that $\gamma^{\vee} + \mathbb{Z}\Delta^{\vee}_P \leq \alpha^{\vee} + \mathbb{Z}\Delta^{\vee}_P$, which contradicts that $\alpha$ is $P$-cosmall.  

\bigskip

Assume now that $\Delta(\alpha) \cap \Delta_P = \emptyset$ and $\alpha$ is not $P$-cosmall. Then choose $\gamma \in R^+$ such that $\gamma > \alpha$ and $\gamma^{\vee} + \mathbb{Z}\Delta^{\vee}_P \leq \alpha^{\vee} + \mathbb{Z}\Delta^{\vee}_P$. Choose $\beta \in \Delta(\alpha)$ such that $\alpha + \beta \leq \gamma$. By assumption, we must have that $\beta \notin \Delta_P$. Now by considering the coefficient of $\beta^{\vee}$ in the inequality $\gamma^{\vee} + \mathbb{Z}\Delta^{\vee}_P \leq \alpha^{\vee} + \mathbb{Z}\Delta^{\vee}_P$, we can conclude that $\beta \leq \alpha$ and $\lvert \alpha \rvert < \lvert \gamma \rvert$. The second fact implies that that $\alpha$ is a short cosmall root, but this contradicts the statement of Lemma 4. 
\end{proof}

\section{Tables of $P$-cosmall roots in classical root systems}

Here we expand on the table provided in \cite{BM} by listing $\Delta(\alpha)$ for each cosmall root in the classical root systems. This allows one to easily check if a root is $P$-cosmall since it suffices to check if $\Delta(\alpha) \cap \Delta_P = \emptyset$. Since $R$ will either be of the form $A_{l-1},B_l,C_l$, or $D_l$, we can identify $R$ with a subset of $\mathbb{R}^l$. Let $e_1, \ldots, e_l$ be the standard basis for $\mathbb{R}^l$, and set $\beta_i = e_{i} - e_{i+1}$ for $1 \leq i \leq l - 1$. For type B, C, and D, define $\beta_l$ to be $e_l$, $2e_l$, and $e_{l-1} + e_{l}$ respectively. Recall that in case of types $A$ and $D$, all roots are cosmall. Note that $\Delta(\alpha)$ is a set of simple roots and if the index of a simple root $\beta_i$ is out of range, it should be interpreted as not being a member of the set. 

\bigskip

\subsection*{Type A}

Assume that $R$ is a root system of type $A$ with Dynkin diagram: 

\begin{align*}
\begin{tikzpicture}[start chain]
\dnode{1}
\dnode{2}
\dydots
\dnode{l-2}
\dnode{l-1}
\end{tikzpicture}
\end{align*}

\begin{center}
\begin{tabular}{|l|l|l|l|}
\hline
Simple & \multicolumn{3}{|l|}{$\beta_1,\beta_2,\dots,\beta_{l-1}$}           
\\ \hline
Cosmall & $e_i - e_j = \beta_i + \cdots + \beta_{j-1}$ & $1 \leq i < j \leq l$ & $\Delta(\alpha) = \{\beta_{i-1}, \beta_j\}$ 
\\ \hline
\end{tabular}
\end{center}

\bigskip

\subsection*{Type B}

Assume that $R$ is a root system of type $B$ with Dynkin diagram:

\begin{align*}
\begin{tikzpicture}[start chain]
\dnode{1}
\dnode{2}
\dydots
\dnode{l-1}
\dnodenj{l}
\path (chain-4) -- node[anchor=mid] {\(\Rightarrow\)} (chain-5);
\end{tikzpicture}
\end{align*}

\begin{center}
\begin{tabular}{|l|l|l|l|}
\hline
Simple & \multicolumn{3}{|l|}{$\beta_1,\beta_2,\dots,\beta_{l-1},\beta_l$}                       
\\ \hline
\multirow{2}{*}{Long} & $e_i - e_j = \beta_i + \cdots + \beta_{j-1}$ & \multicolumn{2}{|l|}{$1 \leq i < j \leq l$}
\\ \cline{2-4} & $e_i + e_j = \beta_i + \cdots + \beta_{j-1} + 2\beta_j + \cdots + 2\beta_l$ & \multicolumn{2}{|l|}{$1 \leq i < j \leq l$}
\\ \hline
Short & $e_i = \beta_i + \cdots + \beta_l$ & \multicolumn{2}{|l|}{$1 \leq i \leq l$} 
\\ \hline
\multirow{3}{*}{Cosmall}  & \multicolumn{2}{|l|}{$e_l = \beta_l$} & $\Delta(\alpha) = \{\beta_{l-1}\}$
\\ \cline{2-4} & $e_i - e_j = \beta_i + \cdots + \beta_{j-1}$ & $1 \leq i < j \leq l$ & $\Delta(\alpha) = \{\beta_{i-1},\beta_{j}\}$
\\ \cline{2-4} & $e_i + e_j = \beta_i + \cdots + \beta_{j-1} + 2\beta_j + \cdots + 2\beta_l$ & $1 \leq i < j \leq l$ & $\Delta(\alpha) = \{\beta_{i-1}, \beta_{j-1}\} \setminus {\beta_i}$   
\\ \hline
\end{tabular}
\end{center}

\bigskip

\subsection*{Type C}

Assume that $R$ is a root system of type $C$ with Dynkin diagram:

\begin{align*}
\begin{tikzpicture}[start chain]
\dnode{1}
\dnode{2}
\dydots
\dnode{l-1}
\dnodenj{l}
\path (chain-4) -- node[anchor=mid] {\(\Leftarrow\)} (chain-5);
\end{tikzpicture}
\end{align*}

\begin{center}
\begin{tabular}{|l|l|l|l|}
\hline
Simple & \multicolumn{3}{|l|}{$\beta_1,\beta_2,\dots,\beta_{l-1},\beta_l$}     
\\ \hline
Long & $2e_i = 2\beta_i + \cdots + 2\beta_{l-1} + \beta_l$ & \multicolumn{2}{|l|}{$1 \leq i \leq l$}
\\ \hline
\multirow{2}{*}{Short} & $e_i - e_j = \beta_i + \cdots + \beta_{j-1}$ & \multicolumn{2}{|l|}{$1 \leq i< j \leq l$}
\\ \cline{2-4} & $e_i + e_j = \beta_i + \cdots + \beta_{j-1} + 2\beta_j + \cdots + 2\beta_{l-1} + \beta_l$ & \multicolumn{2}{|l|}{$1 \leq i < j \leq l$} 
\\ \hline
\multirow{2}{*}{Cosmall} & $2e_i = 2\beta_i + \cdots + 2\beta_{l-1} +\beta_l$ & $1 \leq i \leq l$ & $\Delta(\alpha) = \{\beta_{i-1}\}$
\\ \cline{2-4} & $e_i - e_j = \beta_i + \cdots \beta_{j-1}$ & $1 \leq i < j \leq l$ & $\Delta(\alpha) = \{\beta_{i-1},\beta_j\}$
\\ \hline
\end{tabular}
\end{center}

\bigskip

\subsection*{Type D}

Assume that $R$ is a root system of type $D$ with Dynkin diagram:

\begin{align*}
\begin{tikzpicture}
\begin{scope}[start chain]
\dnode{1}
\dnode{2}
\node[chj,draw=none] {\dots};
\dnode{l-2}
\dnode{l-1}
\end{scope}
\begin{scope}[start chain=br going above]
\chainin(chain-4);
\dnodebr{l}
\end{scope}
\end{tikzpicture}
\end{align*}

\begin{center}
\begin{tabular}{|l|l|l|l|}
\hline
Simple & \multicolumn{3}{|l|}{$\beta_1,\beta_2,\dots,\beta_{l-1}, \beta_l$} 
\\ \hline
\multirow{7}{*}{Cosmall} & \multirow{2}{*}{$e_i -e_j = \beta_i + \cdots + \beta_{j-1}$} & \multirow{2}{*}{$1 \leq i < j \leq l$} & $\Delta(\alpha) = \{\beta_{i-1},\beta_j\}$
\\ & & & if $i \neq l-1$ or $j \neq l$
\\ \cline{2-4} & \multirow{2}{*}{$e_i -e_j = \beta_i + \cdots + \beta_{j-1}$} & \multirow{2}{*}{$1 \leq i < j \leq l$} & $\Delta(\alpha) = \{\beta_{i-1}\}$
\\ & & & if $i = l-1$ and $j = l$
\\ \cline{2-4} & $e_i + e_l = \beta_i + \cdots + \beta_{l-2} + \beta_l$ & $1 \leq i \leq l-1$ & $\Delta(\alpha) = \{\beta_{i-1},\beta_{l-1}\}$
\\ \cline{2-4} & $e_i + e_j = \beta_i + \cdots + \beta_{j-1} + 2\beta_j + $ & \multirow{2}{*}{$1 \leq i < j \leq l-1$} & \multirow{2}{*}{$\Delta(\alpha) = \{\beta_{i-1}, \beta_{j-1}\} \setminus \{\beta_i\}$} 
\\ & \hspace*{1cm} $\cdots + 2\beta_{l-2} + \beta_{l-1} + \beta_l$ & & 
\\ \hline
\end{tabular}
\end{center}

\emergencystretch=2em
\printbibliography
\end{document}